\tikzset{->-/.style={decoration={
  markings,
  mark=at position .5 with {\arrow{>}}},postaction={decorate}}}
\newcommand{\cev}[1]{\reflectbox{\ensuremath{\vec{\reflectbox{\ensuremath{#1}}}}}}
\numberwithin{equation}{section}
\theoremstyle{plain}
\newtheorem{theorem}{Theorem}[section]
\newtheorem{lemma}[theorem]{Lemma}
\newtheorem{prop}[theorem]{Proposition}
\theoremstyle{remark}
\newtheorem{rmk}[theorem]{Remark}
\newtheorem{rmks}[theorem]{Remarks}
\theoremstyle{definition}
\newtheorem{dfn}[theorem]{Definition}
\newtheorem{question}{Question}
\author{Bidyut Sanki}
\address{
Institute of Mathematical Sciences\\ 
CIT Campus, Tharamani  \\ 
Chennai, 600113\\
India}
\email{bidyut.iitk7@gmail.com}
\thanks{The author has been supported by the Post Doctoral Fellowship funded by a J. C. Bose fellowship of Prof. Mahan Mj.}
\begin{document}
\title{Embedding of metric graphs on hyperbolic surfaces}

\subjclass[2000]{Primary 57M15; Secondary 05C10}

\keywords{Hyperbolic surface, fat graph, girth, Betti number, Betti deficiency}


\begin{abstract}
An embedding of a metric graph $(G, d)$ on a closed hyperbolic surface is \emph{essential}, if each complementary region has a negative Euler characteristic. We show, by construction, that given any metric graph, its metric can be rescaled so that it admits an essential and isometric embedding on a closed hyperbolic surface. The essential genus $g_e(G)$ of $(G, d)$ is the lowest genus of a surface on which such an embedding is possible. In the next result, we establish a formula to compute $g_e(G)$. Furthermore, we show that for every integer $g\geq g_e(G)$, $(G, d)$ admits such an embedding (possibly after a rescaling of $d$) on a surface of genus $g$.

Next, we study minimal embeddings where each complementary region has Euler characteristic $-1$. The maximum essential genus $g_e^{\max}(G)$ of $(G, d)$ is the largest genus of a surface on which the graph is minimally embedded. Finally, we describe a method explicitly for an essential embedding of $(G, d)$, where $g_e(G)$ and $g_e^{\max}(G)$ are realized.
\end{abstract}

\maketitle


\section{Introduction}

Graphs on surfaces play an important role in the study topology and geometry of surfaces. A $2$-cell embedding of a graph $G$ on a closed oriented \emph{topological surface} $S_g$ of genus $g$ is a cellular decomposition of $S_g$, whose $1$-skeleton is isomorphic to $G$~\cite{Duke}. In topological graph theory, characterization of the surfaces on which a graph can be $2$-cell embedded is a famous problem and well studied~\cite{SS}. In this direction, Kuratowski is the first who has shown that a graph is planar if and only if it does not contain $K_{3,3}$ or $K_5$ as a minor, where $K_{3,3}$ is the complete bipartite graph with $(3,3)$ vertices and $K_5$ is the complete graph with $5$ vertices. Hence, these are the only minimal non-planar graphs. 

The genus of a surface $S$ is denoted by $g(S)$. The genus of a graph $G$ is defined by $g(G)=\min\{g(S)\}$, where the minimum is taken over the surfaces $S$ on which $G$ is $2$-cell embedded. The maximum genus $g_M(G)$ is similarly defined~\cite{NHX}. 

In~\cite{Duke}, Duke has shown that every finite graph $G$ admits a $2$-cell embedding on a surface $S_g$ of genus $g$ for each $g(G)\leq g \leq g_{M}(G)$. A $2$-cell embedding of a graph realizing its genus is called a minimal embedding. A maximal embedding is defined similarly. In Theorem 3.1~\cite{Duke}, Duke  has derived a sufficient condition for a $2$-cell embedding to be non-minimal and provided an algorithm to obtain an embedding on a lower genus surface. 

The maximum genus problem has studied by Xuong in~\cite{NHX}. In Theorem 3~\cite{NHX}, Xuong has obtained the formula $g_M(G)=\frac{1}{2}(\beta(G) - \zeta(G))$ for maximal genus, where $\beta(G)$ and $\zeta(G)$ are the Betti number and Betti deficiency of $G$ respectively. Furthermore, in a maximal embedding, the number of $2$-cells in the cellular decomposition is $1+\zeta(G).$ For more results on $2$-cell embedding, we refer to~\cite{SS} and~\cite{RR}.

A Riemann surface is a surface equipped with a complex structure. In this paper, when we say surface, we will always mean a Riemannian surface with constant negative sectional curvature -1. Such a surface is called a hyperbolic surface.

Configuration of geodesics on hyperbolic surfaces has become increasingly important in the study of mapping class groups and the moduli spaces of surfaces through the \emph{systolic} function~\cite{PSS} and \emph{filling} pair length function~\cite{Aougab} in particular. The study of filling systems has its origin in the work of Thurston~\cite{Thurston}. The set $\chi_g$, consisting of the closed hyperbolic surfaces of genus $g$ whose systoles fill, is a so-called \emph{Thurston set}. A closed hyperbolic surface with a pair of pants decomposition of bounded lengths is called a \emph{trivalent surface} (see Section 4~\cite{Anderson}). Recently, Anderson, Parlier and Pettet~\cite{Anderson} have studied the shape of $\chi_g$ comparing with the set $Y_g$ of trivalent surfaces by giving a lower bound on the Hausdorff distance between them in the moduli space $\mathcal{M}_g$.      

There is a natural connection between graphs and surfaces. For instance, given a system of curves on a surface, the union forms a so-called \emph{fat graph}, where the intersection points are the vertices, sub-arcs between the intersections are the edges and the cyclic order on the set of edges incident at each vertex is determined by the orientation of the surface. In~\cite{Balacheff}, Balacheff, Parlier and Sabourau have studied the geometry and topology of Riemann surfaces by embedding a suitable graph on the surface which captures some of its geometric and topological properties. 

In this paper, we will be interested in studying graphs on hyperbolic surfaces, whose edges are realized by geodesic segments. Furthermore, no two edges meet at their interior. Such a graph has a metric, where the distances between points on the graph are measured along a shortest path in the induced metric on the graph. A graph $G$ on a closed surface $S$ is \emph{essential}, if each component of $S\setminus G$ has a negative Euler characteristic. In this paper, by a graph, we always mean a \emph{finite} and \emph{connected} graph. In particular, we will be looking at (finite and connected) metric graphs.

\begin{dfn}
A metric graph $(G, d)$ is a pair of a graph $G$ and a positive real valued function $d: E \to \mathbb{R}_+$ on the set $E$ of edges of $G$.
\end{dfn}

The central questions in this paper are the following:

\begin{question}\label{central_question}
Given a metric graph $(G,d)$.

\begin{enumerate}
\item Does there exist a closed hyperbolic surface on which $(G, d)$ can be essentially embedded$?$

\item Characterize the surfaces on which such an embedding of $(G, d)$ is possible.\\What is the lowest genus of such a surface$?$
\end{enumerate}
\end{question}

While studying embeddings of metric graphs, we are of course interested in isometric embeddings, i.e., an injective map $\Phi: (G,d)\to S$ which preserves the lengths of the edges. An isometric embedding $\Phi: (G, d)\to S$ is called \emph{essential}, if $\Phi(G)$ is essential on $S$.

\subsection*{Scaling of metric} 
Given a metric graph $(G, d)$ and a positive real number $t$, define 
\begin{eqnarray*}
d_t: E\to \mathbb{R}_+ \text{ by } d_t(e)= td(e) \text{ for all } e\in E.
\end{eqnarray*}
Then $d_t$ is the metric obtained from $d$ scaling by $t$. Perhaps, the more natural question is to ask all these up to scaling. An obstruction in the un-scaled case is the \emph{Margulis lemma} (Corollary 13.7 in~\cite{FM}). Therefore, the general question is as follows: Given a metric graph $(G, d)$, does there exist a $t\in \mathbb{R}_+$ such that $(G, d_t)$ can be embedded essentially and isometrically on a closed hyperbolic surface? From now onwards, by an embedding of a metric graph $(G, d)$, we mean an essential and isometric embedding of $(G, d_t)$ for some $t>0$.

The first result, we obtain, is stated below which answers (1) in Question~\ref{central_question}.

\begin{theorem}\label{thm:1}
Given a metric graph $(G, d)$ with degree of each vertex at least three, there exists a closed hyperbolic surface $S_g$ of genus $g=|E|+\beta(G)$ on which $(G,d)$ is embedded, where $\beta(G)$ and $|E|$ are the Betti number and the number of edges of $G$ respectively. 
\end{theorem}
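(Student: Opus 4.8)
The plan is to build the target surface by fattening $G$ into a fat graph (ribbon graph) and then capping its boundary circles with hyperbolic pieces, while simultaneously arranging that the given metric $d$ is realized isometrically by geodesic edges. Concretely, the genus formula $g=|E|+\beta(G)$ strongly suggests the following bookkeeping. A fat-graph structure on $G$ (a cyclic ordering of the half-edges at each vertex, coming from the eventual orientation of $S$) produces a closed oriented surface $\Sigma$ into which $G$ embeds as a spine, with complementary regions that are the \emph{boundary cycles} of the fat graph. The Euler characteristic of $G$ is $V-|E|=1-\beta(G)$, and if the chosen fat structure yields $b$ boundary cycles then the genus of $\Sigma$ is $g(\Sigma)=1-\tfrac12(\chi(G)+b)=\tfrac12(|E|-V+1+(\,\text{correction}\,))$. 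To hit $g=|E|+\beta(G)=|E|+(|E|-V+1)=2|E|-V+1$ exactly, I would \emph{not} use a minimal (one-face) fattening; instead I would choose the fat structure so that every edge contributes its full topological weight, i.e. I would subdivide or route the ribbons so that the number of complementary regions and their genera add up to the stated value. The cleanest route is to handle each edge and each independent cycle as a separate genus-producing handle, which is precisely what the decomposition $g=|E|+\beta(G)$ encodes.

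The key steps, in order, would be: (i) Fix a fat-graph structure on $G$ (any cyclic order at each vertex, using that every vertex has degree $\ge 3$), and identify its boundary cycles; call their number $b$. (ii) Thicken $G$ to an oriented surface-with-boundary $N(G)$, an $\varepsilon$-neighborhood whose spine is $G$; this is the standard ribbon-graph regular neighborhood, and $G\hookrightarrow N(G)$ is a homotopy equivalence. (iii) Cap each of the $b$ boundary circles of $N(G)$ with a hyperbolic piece of negative Euler characteristic (e.g. a one-holed torus or a surface with one geodesic boundary component), chosen so that the resulting closed surface has genus exactly $2|E|-V+1=|E|+\beta(G)$ and so that each complementary region has $\chi<0$, making the embedding essential. (iv) Put a hyperbolic metric on the closed surface for which each edge of $G$ is a geodesic segment of the prescribed length $d(e)$, and in which distinct edges meet only at vertices. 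Step (iv) is where the metric $d$ (not a rescaling) must be honored: I would build the metric edge-by-edge by assembling right-angled hyperbolic building blocks (pants and hexagons) whose seam lengths are dictated by $d$, using the freedom in the Fenchel–Nielsen-type parameters of the capping pieces to absorb any constraints so that no rescaling of $d$ is needed.

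The main obstacle I expect is step (iv): realizing the \emph{given} lengths $d(e)$ by honest geodesics in a genuine hyperbolic (curvature $-1$) metric, simultaneously for all edges and with the correct incidence pattern at the vertices. There are two tensions to manage. First, at a vertex of degree $k\ge 3$ the $k$ incoming geodesic edges must fit together with consistent angles inside a hyperbolic polygon/cone neighborhood, which constrains the local geometry; I would resolve this by placing a small right-angled or ideal hyperbolic polygon at each vertex and attaching geodesic arcs to its sides. Second, gluing these local models along edges of length exactly $d(e)$ must be compatible with the global hyperbolic structure and with the negative-curvature capping; here the degrees of freedom introduced by the capping surfaces (their moduli, including boundary lengths and twists) should be more than enough to satisfy the finitely many length equations, so that a dimension count makes the realization possible without rescaling. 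The genus being as large as $|E|+\beta(G)$ is precisely what supplies this abundance of moduli, which is why the construction can afford to embed $(G,d)$ itself rather than a scaled copy.
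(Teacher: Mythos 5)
Your step (iv) is where the proposal breaks down, and the break is not a fixable technicality: you insist on realizing the given metric $d$ exactly, with no rescaling, and that claim is false in general. In the paper's framework ``embedded'' is defined to mean an essential isometric embedding of $(G, d_t)$ for \emph{some} scaling factor $t>0$, precisely because of the Margulis lemma obstruction (Corollary 13.7 of~\cite{FM}, cited in the introduction). Concretely, take $G$ to be a wedge of two circles (one vertex, of degree $4$) with both loops of length $\epsilon$ smaller than the two-dimensional Margulis constant. An essential embedding has no disk complementary regions, hence is $\pi_1$-injective, so the two loops give non-commuting elements $a, b \in \pi_1(S,p)$ based at the image $p$ of the vertex, each represented by a loop of length $\epsilon$ at $p$. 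The Margulis lemma then forces $\langle a, b\rangle$ to be elementary (cyclic), contradicting that it is free of rank $2$. So no closed hyperbolic surface of any genus carries an essential isometric copy of this $(G,d)$, and no abundance of moduli in the capping pieces can help; your closing assertion that large genus lets one ``embed $(G,d)$ itself rather than a scaled copy'' is exactly the wrong conclusion.

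Moreover, even once rescaling is permitted, your step (iv) is not an argument: the ``dimension count'' is never carried out, and realizing prescribed edge lengths by geodesic segments meeting correctly at the vertices is the entire content of the theorem, not a transversality afterthought. The paper does this by an explicit block construction. Each vertex $v$ gets a hyperbolic $\deg(v)$-holed sphere $S(v)$ with all boundary geodesics of length $1$, built from Lambert quadrilaterals, together with perpendicular geodesic arcs of length $x_v$ from a central point to each boundary circle. Each edge $e$ with ends $u,v$ gets a pair of pants $P_{x_e}$ whose two length-$1$ boundary geodesics are at distance $f(x_e)=d_t(e)-x_u-x_v$; such a pants exists because $f$ is a continuous, strictly increasing bijection onto $(f_{\min},\infty)$ (Lemma~\ref{lem:1}) and $t$ is chosen large enough that $d_t(e)-x_u-x_v>f_{\min}$ for every edge --- this is exactly where rescaling is indispensable. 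Gluing the blocks along length-$1$ boundaries with twists chosen so that the perpendicular feet match turns each edge into a genuine geodesic of length $d_t(e)$; capping the $|E|$ remaining boundary circles (one per edge pants) with one-holed tori yields a closed surface, and the pants count $2g-2=\sum_{v\in V}(\deg(v)-2)+2|E|$ gives $g=|E|+\beta(G)$. Your topological bookkeeping in steps (i)--(iii) can be arranged to match this genus, but without the rescaling and the explicit hyperbolic blocks the metric half of the theorem remains unproved.
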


\subsection*{Notation.} 
$S(G,d)$ denotes the set of surfaces on which the metric graph $(G, d)$ admits an essential and isometric embedding possibly after rescaling its metric. 

Now, we focus on the genera of the surfaces in $S(G,d)$. 

\begin{dfn}
The essential genus $g_e(G)$ of a metric graph $(G, d)$ is defined by $$g_e(G)=\min \{ g(S)|\; S\in S(G, d)\}.$$  
\end{dfn}

If $T$ is a spanning (or maximal) tree of a graph $G$, then $\xi(G, T)$ denotes the number of components in $G\setminus E(T)$ with an odd number of edges, where $E(T)$ denotes the set of edges of $T$.

\begin{dfn} 
The Betti deficiency of a graph $G$, denoted by $\zeta(G)$, is defined by
\begin{equation}\label{eq:7.5}
\zeta(G)= \min\{\xi(G, T)| \textit{ $T$ is a spanning tree of $G$}\}.
\end{equation}
\end{dfn}

We prove the theorem, stated below, which computes the essential genus of a metric graph and thus answers (2) in Question~\ref{central_question}.

\begin{theorem}\label{thm:2}
The essential genus of a metric graph $(G, d)$ is given by $$g_{e}(G)=\frac{1}{2}(\beta(G)-\zeta(G))+2q+r,$$ where $\beta(G), \zeta(G)$ are the Betti number and Betti deficiency of $G$ respectively and $q, r$ are the unique integers satisfying $\zeta(G)+1=3q+r$, $0\leq r < 3$. Furthermore, for any given $g\geq g_e(G)$, there exists a closed hyperbolic surface $F$ of genus $g$ on which $(G,d)$ can be essentially embedded.
\end{theorem}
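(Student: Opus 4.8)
The plan is to reduce the computation of $g_e(G)$ to a purely combinatorial optimization governed by the Euler characteristic, and then to realize the optimal configuration geometrically using the construction underlying Theorem~\ref{thm:1}. First I would fix an arbitrary essential embedding $\Phi\colon(G,d)\to S_g$ and pass to a closed regular neighborhood $N$ of $\Phi(G)$, which deformation retracts onto $G$ and hence satisfies $\chi(N)=\chi(G)=1-\beta(G)$ by connectedness. Writing $R_1,\dots,R_f$ for the closures of the components of $S_g\setminus N$ (each a compact orientable surface with boundary) and noting that $N\cap\big(\bigcup_i R_i\big)=\partial N$ is a disjoint union of circles, inclusion--exclusion gives
\begin{equation*}
2-2g=\chi(S_g)=\chi(N)+\sum_{i=1}^{f}\chi(R_i)=\big(1-\beta(G)\big)+\sum_{i=1}^{f}\chi(R_i),
\end{equation*}
so that $2g=1+\beta(G)+\sum_{i}|\chi(R_i)|$, since essentiality forces $\chi(R_i)\le-1$. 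Thus minimizing $g$ is exactly minimizing $\sum_i|\chi(R_i)|$ over admissible configurations.

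Next I would carry out the optimization in two stages. The combinatorics of $\Phi$ are encoded by the ribbon (fat-graph) structure on $N$; if $b$ denotes its number of boundary circles, then capping each circle with a disc exhibits $G$ as a cellular graph on a closed surface with exactly $b$ faces, so $b$ is the face number of some $2$-cell embedding of $G$. By Xuong's theorem~\cite{NHX} the minimal face number is $1+\zeta(G)$, whence $b\ge1+\zeta(G)$ for every essential embedding. For a fixed $b$ the boundary circles partition among the $R_i$ with $\sum_i b_i=b$, and a region with $b_i$ boundary circles and genus $h_i$ is essential exactly when $2h_i+b_i\ge3$, with $|\chi(R_i)|=2h_i+b_i-2$. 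A short case analysis shows the minimal $|\chi(R_i)|$ of an essential region with $b_i$ boundary circles equals $1$ precisely when $b_i\in\{1,3\}$ and is at least $2$ otherwise; hence the cheapest way to absorb $b$ circles uses as many pairs of pants ($b_i=3$) as possible, and writing $b=3q'+r'$ with $0\le r'<3$ yields $\min\sum_i|\chi(R_i)|=q'+r'$. Since the actual configuration of $\Phi$ is one admissible partition, its cost is bounded below by this minimum. Finally, minimizing $q'+r'$ over the admissible values $b\ge1+\zeta(G)$ of the correct parity is a finite check on $b=1+\zeta(G),\,1+\zeta(G)+2,\dots$, and is attained at $b=1+\zeta(G)$; writing $\zeta(G)+1=3q+r$ gives $\min\sum_i|\chi(R_i)|=q+r$ and the lower bound $2g\ge1+\beta(G)+q+r$, which rearranges to $g\ge\tfrac12(\beta(G)-\zeta(G))+2q+r$.

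For the matching upper bound and the realization claim I would build the extremal surface explicitly: take a maximum-genus ribbon structure on $G$, so that $N$ has $b=1+\zeta(G)$ boundary circles, cap these by $q$ hyperbolic pairs of pants together with, when $r>0$, one further essential piece absorbing the remaining $r$ circles, and then invoke the geometric construction of Theorem~\ref{thm:1} to install a hyperbolic metric in which $\Phi(G)$ is geodesic with the prescribed rescaled edge lengths and every complementary region has geodesic boundary. This produces an essential isometric embedding on $S_{g_e(G)}$. For the final statement, starting from this extremal surface I would attach a handle inside a single complementary region, which decreases its Euler characteristic by $2$, keeps it essential, and raises the total genus by exactly $1$; iterating this move realizes every integer $g\ge g_e(G)$ without gaps.

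I expect the main obstacle to be the geometric realization rather than the Euler-characteristic bookkeeping: one must produce an honest hyperbolic surface in which the edges of $(G,d_t)$ are simultaneously geodesic of prescribed lengths and the capping pieces carry hyperbolic structures with matching geodesic boundary. This is precisely where the rescaling is indispensable --- enlarging $t$ makes the edge geodesics long enough to meet the collar and Margulis constraints and frees up the boundary lengths of the capping pieces --- and it is the content that Theorem~\ref{thm:1} already supplies, so the new work is to verify that its construction adapts to the more economical capping by pairs of pants used here.
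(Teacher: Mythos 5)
Your proposal is correct, and its second half (the construction meeting the bound, and the realization of every genus $g\geq g_e(G)$) is essentially the paper's own proof: the paper also takes a fat graph structure attaining Xuong's maximum genus $\tfrac{1}{2}(\beta(G)-\zeta(G))$, so that $\Sigma_0(G,d_t,\sigma_0)$ has $1+\zeta(G)$ geodesic boundary circles, caps with $q$ pairs of pants (Glue I) and $r$ one-holed tori (Glue II), and obtains higher genera by swapping one capping piece for a higher-genus piece with the same boundary length --- your ``attach a handle inside one complementary region'' in different words. Where you genuinely differ is the lower bound, and your version is the stronger one. The paper's minimality argument (via Lemma~\ref{lem:5.1} and Lemma~\ref{prop:1}) only compares the closed surfaces $S(G,d_t,\sigma_0)$ arising from its own construction as the fat graph structure $\sigma_0$ varies; it does not directly address an arbitrary essential embedding, whose complementary regions need not be pants or one-holed tori. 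Your argument does: additivity of the Euler characteristic over $S_g=N\cup\bigcup_iR_i$ gives $2g=1+\beta(G)+\sum_i\lvert\chi(R_i)\rvert$, Xuong's theorem read through face counts gives $b\geq 1+\zeta(G)$ with $b$ of the same parity as $1+\zeta(G)$, and the capping optimization then bounds every element of $S(G,d)$, which is what the statement of the theorem actually claims. Two steps you compress should be written out, though both check out: (i) the greedy count $\min\sum_i\lvert\chi(R_i)\rvert=q'+r'$ for $b=3q'+r'$ needs, when $r'=2$, a parity argument ruling out cost $q'+1$ (an all-cost-one capping by $a$ one-holed pieces and $c$ pants would force $a+c=q'+1$ and $a+3c=3q'+2$, hence $2c=2q'+1$, impossible); and (ii) ``attained at $b=1+\zeta(G)$'' is a minimization over an infinite set, reduced to checking the three values $1+\zeta(G)$, $3+\zeta(G)$, $5+\zeta(G)$ by the periodicity $\mathrm{cost}(b+6)=\mathrm{cost}(b)+2$ --- and the parity restriction is indispensable here, since e.g.\ for $1+\zeta(G)=5$ the unconstrained minimum over $b\geq 5$ would be $2$ (at $b=6$), strictly below the true value $3$.
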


An embedding of $(G,d)$ on a hyperbolic surface $S$ is the simplest, if the Euler characteristic of each complementary region is $-1$ and hence, we define \emph{minimal embedding} as follows.

\begin{dfn}
An embedding $\Phi: (G,d) \to S$ is called minimal, if $\chi(\Sigma)=-1$ for each component $\Sigma$ in $S\setminus \Phi(G)$.
\end{dfn}

Given a metric graph, there exists a minimal embedding, where the essential genus is realized. Note that, the essential genus can also be realized by a non-minimal embedding. For instance, the complement might contains a torus with two boundary components. The set of closed hyperbolic surfaces, on which $(G,d)$ can be minimally embedded, is denoted by $S_m(G,d)$. The genera of the surfaces in $S_m(G,d)$ are bounded from below by $g_e(G)$ and this bound is sharp. We define $$g_e^{\max}(G)=\max\{g(S)| \; S\in S_m(G,d)\}.$$ It is a fact that $g^{\max}_e(G)\leq 1/2\left( \beta(G) +1 + 2|E|/T(G)\right)$, follows from Euler's equation (for instance, we refer to~\cite{Beineke}). Here, $T(G)$ is the \emph{girth} of the graph $G$.

Now, we focus on an explicit construction of minimal (or maximal) embedding as this is preferred over random constructions. To embed a graph on a surface minimally (or maximally), the crucial part is to find a suitable \emph{fat graph structure} which gives the minimum (or maximum) number of boundary components among all possible fat graph structures on the graph. For a definition of fat graphs, we refer to Definition~\ref{def:fat}. For a fat graph structure $\sigma_0$ on $G$, the number of boundary components in $(G, \sigma_0)$ is denoted by $\#\partial(G, \sigma_0)$.

We prove the following proposition which leads to an algorithm for minimal and maximal embeddings. Furthermore, given any integer $g$ satisfying $g_e(G)\leq g \leq g_e^{\max}(G)$, there exists a closed hyperbolic surface of genus $g$ on which $(G, d)$ can be minimally embedded and thus answers Question~\ref{central_question}.

\begin{prop}\label{thm:4}
Let $G=(E, \sim, \sigma_1)$ be any graph with degree of each vertex at least three. Suppose, $\sigma_0=\prod\limits_{v\in V}\sigma_v$ be a fat graph structure on $G$ such that there is a vertex $v$ which is common in $b\;(\geq 3)$ boundary components. Then there exists a fat graph structure $\sigma'_{0}$ on $G$, such that  $$\#\partial(G, \sigma_0') = \# \partial(G, \sigma_0) - 2.$$
\end{prop}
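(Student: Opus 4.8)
The plan is to recast boundary components combinatorially and reduce the statement to an elementary fact about products of cycles in a symmetric group. Recall that the fat graph structure $\sigma_0=\prod_{w}\sigma_w$ together with the edge involution $\sigma_1$ determines the boundary (face) permutation $\varphi=\sigma_0\sigma_1$ on the set $E$ of directed edges, and that $\#\partial(G,\sigma_0)$ is the number of cycles of $\varphi$. The first step I would record is a locality statement: if one replaces the single rotation $\sigma_v$ by another cyclic permutation $\sigma_v'$ of the directed edges issuing from $v$ and leaves every other $\sigma_w$ unchanged, then $\varphi$ is altered only on the directed edges pointing into $v$. Hence every boundary component avoiding $v$ is preserved, and only the $b$ boundary components through $v$ are reorganized; so it suffices to control the number of faces meeting $v$.

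Next I would set up the local model. Write $k=\deg v\ (\ge 3)$ and let $x_1,\dots,x_k$ be the directed edges issuing from $v$ in the cyclic order prescribed by $\sigma_v$, with reverses $y_i=\sigma_1(x_i)$. Tracing $\varphi$ gives $y_i\mapsto x_{c(i)}$, where $c$ is the $k$-cycle encoding $\sigma_v$, whereas the portion of the boundary walk lying outside $v$ carries an outgoing edge $x_j$ back to an incoming edge $y_{\rho(j)}$; the key point is that $\rho\in S_k$ is determined by $\sigma_1$ and by the rotations at the vertices other than $v$, so it is \emph{fixed} once and for all. The faces through $v$ are precisely the cycles of $P=\rho\circ c$. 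Thus the hypothesis reads: for the initial rotation $s=(1\,2\,\cdots\,k)$ the permutation $\rho\circ s$ has $b\ge 3$ cycles, and the goal becomes purely algebraic, namely to find another $k$-cycle $c'$ for which $\rho\circ c'$ has exactly $b-2$ cycles. Setting $\sigma_v'=c'$ then gives $\#\partial(G,\sigma_0')=\#\partial(G,\sigma_0)-2$.

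For the construction I would choose an index from each of three distinct cycles of $P=\rho s$ (possible since $b\ge 3$), say $a_1,a_2,a_3$, and put $c'=s\gamma$, where $\gamma$ is the unique $3$-cycle on $\{a_1,a_2,a_3\}$ whose cyclic orientation matches the order in which $a_1,a_2,a_3$ are met while traversing $s$. Two facts then conclude the argument. First, multiplying the $k$-cycle $s$ by a $3$-cycle oriented compatibly with $s$ again yields a single $k$-cycle, so $c'$ is a bona fide rotation at $v$. Second, because $a_1,a_2,a_3$ sit in three distinct cycles of $P$, multiplication by any $3$-cycle supported on them fuses those three cycles into one, so $\rho\circ c'=P\gamma$ has $b-2$ cycles. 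These two requirements are compatible: the orientation demanded by the first fact places no constraint incompatible with the distinct-cycle condition exploited in the second.

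I expect the orientation statement in the first fact to be the main obstacle, since it is exactly what prevents the modified rotation $c'=s\gamma$ from breaking the single vertex $v$ into several vertices. I would settle it by the standard computation showing that $s\gamma$ is a single $k$-cycle precisely when $\gamma$ permutes its support in the same rotational sense as $s$, and splits into three cycles otherwise. The auxiliary claims—that left multiplication by a $3$-cycle changes the number of cycles by $-2$, $0$, or $+2$, that support in distinct cycles forces the merging value $-2$, and that only the faces through $v$ are disturbed—are routine; and the parity observation that all $k$-cycles share the sign $(-1)^{k-1}$, so that the number of cycles of $\rho\circ c$ has a parity independent of the $k$-cycle $c$, confirms that $b-2$ is a consistent and attainable target.
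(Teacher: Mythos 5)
Your proposal is correct, and although the move at its heart coincides with the paper's, your verification runs along a genuinely different track. The paper normalizes labels at $v$ so that two of the three chosen boundary components share the edge $e_2$, namely $b_1=\vec{e}_2P_1\cev{e}_1$ and $b_2=\vec{e}_3P_2\cev{e}_2$, while a third meets $v$ at the corner $(\cev{e}_i,\vec{e}_{i+1})$; it then moves $\vec{e}_2$ into the slot between $\vec{e}_i$ and $\vec{e}_{i+1}$ and checks by concatenating boundary words that $b_1,b_2,b_3$ fuse into the single component $b_1\ast b_2\ast b_3$. In your language that replacement is exactly $\sigma_v'=\sigma_v\circ(\vec{e}_1\,\vec{e}_2\,\vec{e}_i)$: right multiplication by a compatibly oriented $3$-cycle whose support $\{1,2,i\}$ picks one index from each of the three cycles of $P=\rho\circ s$ corresponding to $b_1,b_2,b_3$ --- so the paper's proof is a particular instance of your construction. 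What your route buys: it needs no normalization (the paper asserts, without argument, that components $b_1,b_2,b_3$ in this special position exist, whereas you only need three indices in distinct cycles of $P$, immediate from $b\ge 3$); it cleanly separates the two things to be checked, namely that $\rho\circ c'$ has $b-2$ cycles (orientation-independent, by the transposition merge rule) and that $c'=s\gamma$ remains a single $k$-cycle (this is indeed the crux you flagged: with the wrong orientation $c'$ splits into three cycles and $\sigma_0'$ is no longer a fat graph structure on $G$, which is the constraint the paper satisfies silently by construction); and your parity observation recovers Lemma~\ref{lem:5.1} at no extra cost. What the paper's route buys: it is shorter, uses nothing beyond reading words, and it exhibits the merged component $b_1\ast b_2\ast b_3$ explicitly, which is precisely what the reverse, boundary-increasing move of Remarks~\ref{rmks:end} is read off from. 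Both arguments are local at $v$, so your locality lemma is implicit in the paper as well.
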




\section{Preliminaries}\label{sec:2}
In this section, we recall some graph theory and geometric notions. Also, we develop a lemma which is essential in the subsequent sections. 

\subsection{Fat graph}\label{Fat graph} 
Before going to the formal definition of fat graph (ribbon graph), we  recall a definition of graph and a few graph parameters. The definition of graph we use here, is not the standard one which is used in ordinary graph theory. But, it is straightforward to see that this definition is equivalent to the standard one.

\begin{dfn}
A finite graph is a triple $G=(E_1, \sim, \sigma_1)$, where $E_1$ is a finite, non-empty set with an even number of elements, $\sigma_1$ is a fixed-point free involution on $E_1$ and $\sim$ is an equivalence relation on $E_1$.
\end{dfn}

In ordinary language, $E_1$ is the set of directed edges, $E:=E_1/\sigma_1$ is the set of undirected edges and $V:=E_1/\!\!\sim$ is the set of vertices. The involution $\sigma_1$ maps a directed edge to its reverse directed edge. If $\vec{e}\in E_1$, we say that $\vec{e}$ is emanating from the vertex $v=[\vec{e}]$, the equivalence class of $\vec{e}$. The degree of a vertex $v\in V$ is defined by $\deg(v)=|v|$. 

The \emph{girth} $T(G)$ of a graph $G$ is the length of a shortest non-trivial simple cycle, where the length of a cycle is the number of edges it contains. Furthermore, the girth of a tree (graph without a simple cycle) is defined to be infinity. The \emph{Betti number} of $G$, denoted by $\beta(G)$, is defined by $\beta(G)= -|V|+|E|+1$. 

Now, we define fat graphs. Informally, a \emph{fat graph} is a graph equipped with a cyclic order on the set of directed edges emanating from each vertex. If the degree of a vertex is less than three, then the cyclic order is trivial. Therefore, we consider the graphs with degree of each vertex at least three.  

\begin{dfn}\label{def:fat}
A fat graph is a quadruple $G=(E_1, \sim, \sigma_1, \sigma_0)$, where
\begin{enumerate}
\item $(E_1, \sim, \sigma_1)$ is a graph and 
\item $\sigma_0$ is a permutation on $E_1$ so that each cycle corresponds to a cyclic order on the set of oriented edges emanating from a vertex.
\end{enumerate}
\end{dfn}

For a vertex $v$ of degree $d$, $ \sigma_{v} = (e_{v,1}, e_{v,2}, \dots, e_{v,d})$ represents a cyclic order on $v$, where $e_{v, i}, i=1,2, \dots, d$ are the directed edges emanating from the vertex $v$ and $\sigma_0=\prod_{v\in V}\sigma_v$. Given a fat graph $G$, we can construct an oriented topological surface $\Sigma(G)$ with boundary by thickening its edges. The number of boundary components in $\Sigma(G)$ is the number of disjoint cycles in $\sigma_1*\sigma_0^{-1}$ (see~\cite{BS}, Section 2.1). For more details on fat graphs, we refer to~\cite{BS},~\cite{Bidyut} and~\cite{AR}.

\subsection{Pair of pants}\label{pants} A hyperbolic 3-holed sphere is called a pair of pants. It is a fact in hyperbolic geometry that given any three positive real numbers $l_1, l_2$ and $l_3$, there exists a unique pair of pants with boundary geodesics of lengths $l_1, l_2$ and $l_3$ (see Section 3.1 in~\cite{PB}). Let $P_x$ be the pair of pants with boundary geodesics of lengths $1, 1$ and $2x$, where $x\in \mathbb{R}_+$. We define a function $f$, where for $x\in \mathbb{R}_+$, $f(x)$ is the distance between the two boundary components of $P_x$ of length $1$. 

\begin{lemma}\label{lem:1}
The function $f$ is continuous and strictly monotonically increasing with $$f(\mathbb{R}_+)=(f_{min}, \infty), \text{ where } f_{\min} = \cosh^{-1}\left( \frac{\cosh^2\frac{1}{2}+1}{\sinh^2 \frac{1}{2}} \right).$$ 
\end{lemma}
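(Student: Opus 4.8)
The plan is to work with the explicit hyperbolic geometry of the pair of pants $P_x$, which has boundary geodesics of lengths $1, 1, 2x$. First I would recall the standard right-angled hexagon decomposition: cutting $P_x$ along the three shortest geodesic arcs (the common perpendiculars) joining pairs of boundary components decomposes the pair of pants into two isometric right-angled hexagons. The seam connecting the two length-$1$ boundary components has length exactly $f(x)$, and this seam is the common perpendicular between those two cuffs. So $f(x)$ is precisely the length of one side of a right-angled hexagon whose three alternating sides are the half-lengths $1/2, 1/2, x$ of the three boundary geodesics. The whole lemma then reduces to analyzing one explicit function coming from the right-angled hexagon formula.

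**Deriving the explicit formula.** The key tool is the law of cosines for right-angled hyperbolic hexagons. For a right-angled hexagon with consecutive sides $a, \gamma, b, \alpha, c, \beta$ (sides alternating between the three cuff-halves and the three seams), the relevant identity is
\begin{equation*}
\cosh c = \sinh a \, \sinh b \, \cosh \gamma - \cosh a \, \cosh b,
\end{equation*}
where $c$ is the seam opposite the side $\gamma$. Here I would set the two sides adjacent to the seam $f(x)$ equal to $1/2$ (the half-cuffs of the two length-$1$ boundaries) and the opposite side equal to $x$ (the half of the length-$2x$ boundary). This yields the closed form
\begin{equation*}
\cosh f(x) = \sinh^2\!\tfrac{1}{2}\,\cosh x + \cosh^2\!\tfrac{1}{2}.
\end{equation*}
(The precise assignment of which hexagon side plays which role is the one bookkeeping point to get right, but it is determined by which pair of cuffs the seam $f$ joins.)

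**Monotonicity and the range.** Once this formula is in hand the rest is elementary calculus. Since $\cosh$ is continuous and strictly increasing on $[0,\infty)$, and since $x\mapsto \sinh^2\tfrac12\,\cosh x + \cosh^2\tfrac12$ is continuous and strictly increasing for $x \in \mathbb{R}_+$ (because $\sinh^2\tfrac12 > 0$ and $\cosh x$ is strictly increasing there), the composite $f$ is continuous and strictly increasing; I would make this rigorous by differentiating, noting $\cosh f(x)\, f'(x) = \sinh^2\tfrac12\,\sinh x > 0$ for $x>0$. For the range, I would evaluate the endpoints: as $x \to 0^+$ we get $\cosh f \to \sinh^2\tfrac12 + \cosh^2\tfrac12$, giving the infimum
\begin{equation*}
f_{\min} = \cosh^{-1}\!\left(\sinh^2\tfrac12 + \cosh^2\tfrac12\right),
\end{equation*}
and as $x \to \infty$ we have $\cosh f(x) \to \infty$, so $f(x)\to\infty$. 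I would then reconcile this with the stated form $\cosh^{-1}\!\big((\cosh^2\tfrac12 + 1)/\sinh^2\tfrac12\big)$; the identity $\cosh^2\tfrac12 = 1 + \sinh^2\tfrac12$ should convert one expression into the other, so verifying this algebraic coincidence is a necessary consistency check. Since $f$ is strictly increasing and continuous with these limits, and the infimum at $x=0$ is not attained (as $x \in \mathbb{R}_+$ is strictly positive), the image is exactly the open interval $(f_{\min}, \infty)$.

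**Main obstacle.** The genuine mathematical content is entirely concentrated in identifying $f(x)$ with the correct hexagon side and selecting the right hexagon cosine law — everything after the formula is routine monotonicity analysis. Thus the main obstacle I anticipate is the geometric bookkeeping: correctly justifying that the minimal distance between the two length-$1$ cuffs is realized by the hexagon seam opposite the length-$2x$ cuff, and matching the resulting closed form to the stated $f_{\min}$ through the $\cosh^2\tfrac12 - \sinh^2\tfrac12 = 1$ identity.
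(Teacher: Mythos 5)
Your overall strategy is exactly the paper's: decompose $P_x$ along the common perpendiculars into two isometric right-angled hexagons with alternate sides $\tfrac12,\tfrac12,x$, apply the right-angled hexagon law of cosines (formula (i) of Theorem 2.4.1 in Buser), and finish with elementary monotonicity and limit arguments. However, there is a genuine error in the one step that carries all the content: the application of the hexagon formula. In the identity $\cosh c=\sinh a\,\sinh b\,\cosh\gamma-\cosh a\,\cosh b$, the sides $a,b$ are the ones \emph{flanking} $\gamma$, and $c$ is \emph{opposite} $\gamma$. Since the seam of length $f(x)$ is flanked by the two half-cuffs of length $\tfrac12$ and opposite the half-cuff of length $x$, you must substitute $\gamma=f(x)$, $a=b=\tfrac12$, $c=x$, which gives $\cosh x=\sinh^2\tfrac12\,\cosh f(x)-\cosh^2\tfrac12$, i.e.
\[
\cosh f(x)=\frac{\cosh x+\cosh^2\tfrac12}{\sinh^2\tfrac12},
\]
the formula appearing in the paper. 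Instead you placed $f(x)$ in the $c$ slot and $x$ in the $\gamma$ slot (and the sign of the $\cosh a\,\cosh b$ term also flipped), arriving at $\cosh f(x)=\sinh^2\tfrac12\,\cosh x+\cosh^2\tfrac12$. This is not equivalent to the correct formula, and the discrepancy is not repaired by $\cosh^2\tfrac12-\sinh^2\tfrac12=1$ as you hoped: your expression gives $\cosh f_{\min}=\sinh^2\tfrac12+\cosh^2\tfrac12=\cosh 1\approx 1.54$, hence $f_{\min}=1$, whereas the lemma's value satisfies $\cosh f_{\min}=\bigl(\cosh^2\tfrac12+1\bigr)/\sinh^2\tfrac12\approx 8.37$. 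So the consistency check you flagged as necessary would in fact fail, and your closed form does not prove the stated lemma.

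The remainder of your argument (continuity, strict monotonicity, and the identification of the range as the open interval $(f_{\min},\infty)$) is sound and goes through verbatim once the correct formula is in place, since $x\mapsto\bigl(\cosh x+\cosh^2\tfrac12\bigr)/\sinh^2\tfrac12$ is likewise continuous, strictly increasing, and tends to $\infty$; this is the part the paper compresses into ``which implies the lemma.'' One small additional slip: differentiating $\cosh f(x)$ gives $\sinh f(x)\,f'(x)$ on the left-hand side, not $\cosh f(x)\,f'(x)$.
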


\begin{proof}
The distance $f(x)$ between the boundary components of length $1$ is realized by the common perpendicular geodesic segment to these boundary geodesics. 

The common perpendicular geodesic segments between the pair of distinct boundary components of $P_x$ decompose it into two isometric right angled hexagons with alternate sides of lengths $\frac{1}{2}, \frac{1}{2}$ and $x$. Now, using formula (i) in Theorem 2.4.1~\cite{PB}, we have $$f(x)=\cosh^{-1}\left( \frac{\cosh^2\frac{1}{2}+\cosh x}{\sinh^2 \frac{1}{2}} \right),$$ which implies the lemma. 
\end{proof}


\section{Essential embedding of metric graph}\label{sec:3}
In this section, we prove Theorem~\ref{thm:1}. Note that, if a graph $G$ is a cycle, then it is easy to see that the metric graph $(G, d)$ can be essentially and isometrically embedded on any hyperbolic surface for any metric $d$, possibly after rescaling the metric. Therefore, in the remaining part of this section, we exclude the case, where the graph is a cycle. 

\begin{dfn}
A metric graph is called geometric, if it can be essentially and isometrically embedded on a closed hyperbolic surface.
\end{dfn}

Let $(G, d)$ be a metric graph with degree of each vertex at least two, where $G=(E_1, \sim, \sigma_1)$. If $v=\{\vec{e}_i: i=1, 2\}$ is a vertex with degree $2$, then we define a new graph $G'=(E_1', \sim', \sigma_1')$ with metric $d'$ by removing the vertex $v$ and replacing two edges $e_i=\{\vec{e}_i, \cev{e}_i\}, i=1, 2,$ by a single edge $e=\{\vec{e}, \cev{e}\}$ in $G$ (see Figure~\ref{fig:3}). The metric $d'$ is defined by, $d'(x)=d(x), \;\textit{for all}\;\; x \in E'\setminus \{e\}\;\;\textit{and}\;\;
d'(e) = d(e_1) + d(e_2).$

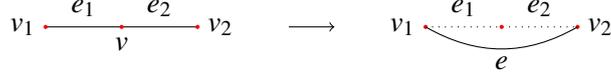
\begin{figure}[htbp]
\begin{center}
\begin{tikzpicture}
\draw [dotted] (-1, 0)node [left] {$v_1$} -- (0,0) -- (1, 0)node [right] {$v_2$}; \draw [fill, red] (1, 0) circle [radius=0.02]; \draw [fill, red] (0, 0) circle [radius=0.02]; \draw [fill, red] (-1, 0) circle [radius=0.02]; 
\draw (-1, 0) to [bend right]  (1, 0);
\draw (0.5, 0)node [above] {$e_2$}; \draw (-0.5, 0)node [above] {$e_1$}; \draw (0, -0.25)node [below] {$e$};
\draw [->] (-2.8, 0) -- (-2.2, 0);
\draw (-6, 0)node [left] {$v_1$} -- (-5,0)node [below] {$v$} -- (-4, 0)node [right] {$v_2$}; \draw [fill, red] (-6, 0) circle [radius=0.02]; \draw [fill, red] (-5, 0) circle [radius=0.02]; \draw [fill, red] (-4, 0) circle [radius=0.02]; \draw (-5.5, 0)node [above] {$e_1$}; \draw (-4.5, 0)node [above] {$e_2$}; 
\end{tikzpicture}
\end{center}
\caption{Replacement of two edges by a single edge and removal of a vertex.}\label{fig:3}
\end{figure}

\begin{lemma}\label{lem1}
A graph $(G,d)$ is geometric if and only if $(G', d')$ is geometric. Moreover, the essential genera of these graphs are same, i.e., $g_e(G)=g_e(G').$
\end{lemma}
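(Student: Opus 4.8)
The plan is to establish the ``if and only if'' by producing an explicit correspondence between essential isometric embeddings of $(G,d)$ and of $(G',d')$, and then to argue that this correspondence preserves the genus of the ambient surface, which will immediately give $g_e(G)=g_e(G')$. The operation relating $G$ and $G'$ is the smoothing of a degree-two vertex $v$, replacing the two edges $e_1,e_2$ incident at $v$ by a single edge $e$ with $d'(e)=d(e_1)+d(e_2)$. The key geometric observation is that, topologically and metrically, the concatenation $e_1\cup e_2$ through $v$ is nothing but a geodesic segment subdivided at an interior point. I would formalize this as: a path in $G$ that is locally geodesic at $v$ passes straight through, so forgetting the vertex changes neither the isometry type of the graph as a metric space nor the homeomorphism type of its complement in any surface.

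First I would prove the forward direction. Suppose $\Phi\colon (G,d_t)\to S$ is an essential isometric embedding for some $t>0$. Because $\deg(v)=2$, the image $\Phi(v)$ is an interior point of the geodesic arc $\Phi(e_1)\cup\Phi(e_2)$; I would need to check this is actually a single geodesic segment, i.e.\ that the two sub-arcs meet smoothly at $\Phi(v)$. Here I anticipate the one genuinely delicate point: the definition of isometric embedding only guarantees lengths of edges are preserved, so a priori $\Phi(e_1)\cup\Phi(e_2)$ could make a corner at $\Phi(v)$. I would handle this by invoking the freedom to rescale and reposition: since we are allowed to rescale the metric and choose any embedding realizing the essential genus, one may straighten the concatenated arc to a genuine geodesic of length $d_t(e_1)+d_t(e_2)=d'_t(e)$ without changing the complementary regions up to homeomorphism (the straightening is an isotopy supported in a neighborhood of the arc, and isotopies of an embedded graph do not alter the Euler characteristics of complementary components). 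This defines $\Phi'\colon(G',d'_t)\to S$, still injective, still isometric on edges, and with the same complement $S\setminus\Phi'(G')=S\setminus\Phi(G)$, hence still essential on the same surface $S$.

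For the converse I would run the construction in reverse: given an essential isometric embedding $\Psi\colon(G',d'_t)\to S$, I subdivide the geodesic segment $\Psi(e)$ at the point lying at arclength $d_t(e_1)$ from one endpoint, declaring that point to be $\Phi(v)$. Subdividing a geodesic arc at an interior point adds a degree-two vertex but does not change the underlying image set $\Psi(G')=\Phi(G)$, so the complement is literally unchanged and the embedding remains essential; the edge lengths $d_t(e_1),d_t(e_2)$ are correct by construction. This gives $S\in S(G,d)$ iff $S\in S(G',d')$, so the two sets of surfaces coincide.

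The equality of essential genera then follows formally: since I would have shown $S(G,d)=S(G',d')$ as sets of surfaces (each embedding of one yielding an embedding of the other on the \emph{same} surface, after possibly an isotopy that does not change genus), taking the minimum of $g(S)$ over this common set gives $g_e(G)=\min\{g(S)\mid S\in S(G,d)\}=\min\{g(S)\mid S\in S(G',d')\}=g_e(G')$. I expect the main obstacle to be the smoothness/straightening issue at $\Phi(v)$ flagged above; everything else is bookkeeping once one accepts that passing a degree-two vertex is the same as subdividing a geodesic segment, an operation that is invisible to both the metric on the graph and the topology of the complement.
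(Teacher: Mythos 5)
Your reverse direction (subdividing the geodesic segment $\Psi(e)$ at the point at arclength $d_t(e_1)$) is correct and gives both ``$(G',d')$ geometric $\Rightarrow$ $(G,d)$ geometric'' and $g_e(G)\leq g_e(G')$. The gap is in the forward direction, exactly at the point you flagged and then waved away. If $\Phi(e_1)$ and $\Phi(e_2)$ meet at $\Phi(v)$ at an angle different from $\pi$, their union is a broken geodesic, and on a hyperbolic surface the geodesic representative of this arc rel endpoints is \emph{strictly shorter} than $d_t(e_1)+d_t(e_2)$: the endpoints are pinned (other edges of prescribed length emanate from them), and a fixed homotopy class rel endpoints contains a unique geodesic segment, whose length is therefore forced and is $<d'_t(e)$. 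So no isotopy can ``straighten the concatenated arc to a genuine geodesic of length $d_t(e_1)+d_t(e_2)$''; straightening and preserving length are mutually exclusive once there is a genuine corner. Rescaling cannot absorb the deficit either, since it occurs only on $e$ while every other edge must keep its length, and a global factor $s\neq t$ would break isometry there. Your appeal to ``the freedom to choose any embedding'' is circular: the existence of an embedding of $(G,d_t)$ that is straight at $\Phi(v)$ is essentially the statement to be proved. (A secondary problem: the geodesic representative need not stay in a small neighborhood of the broken arc, so injectivity and the homeomorphism type of the complement are not automatic either.)

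For what it is worth, the paper states this lemma without proof, so there is no argument to compare against; but a correct proof of the problematic direction must rebuild the surface rather than modify the given embedding in place. One way, using the paper's own machinery (Sections~3--4): the embedding $\Phi$ induces a fat graph structure $\sigma_0$ on $G$, which descends to a structure $\sigma_0'$ on $G'$ with homeomorphic thickening; construct the hyperbolic surface $\Sigma_0(G',d'_s,\sigma_0')$ with totally geodesic boundary in which $(G',d'_s)$ embeds isometrically as a spine, its boundary components corresponding bijectively to those of a regular neighborhood $N$ of $\Phi(G)$ in $S$; then glue to each boundary component a hyperbolic structure with matching geodesic boundary length on the corresponding component of the complement of the interior of $N$ in $S$ --- each such component has negative Euler characteristic by essentiality, so such structures exist with any prescribed boundary lengths. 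The resulting closed hyperbolic surface is homeomorphic to $S$, hence of the same genus, which yields both the missing implication and $g_e(G')\leq g_e(G)$.
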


In light of Lemma~\ref{lem1}, from now, we assume that the degree of each vertex of the graph $G$ is at least three, i.e., $\deg(v)\geq 3,\;\textit{for all}\;\; v\in V.$

 \begin{proof}[Proof of Theorem~\ref{thm:1}]
Let $(G, d)$ be a given metric graph with degree of each vertex at least three. For each vertex $v$, we assign a hyperbolic $\deg(v)$-holed sphere $S(v)$ with each boundary geodesic of length $1$. Namely, we construct $2\deg(v)$-sided right-angled hyperbolic polygon $P(v)$ with one set of alternate sides of length $\frac{1}{2}$ by attaching $2\deg(v)$ copies of $Q(\pi/\deg(v))$. Here, $Q(\theta)$ denotes a sharp corner (also known as Lambert quadrilateral), whose only angle not equal to right angle is $\theta$ and a side opposite to this angle is of length $\frac{1}{4}$ (see Figure~\ref{fig:1.1}). Then consider two copies of $P(v)$ and glue them in an obvious way by isometries to obtain $S(v)$. 

Consider a central point of $P(v)$'s on $S(v)$ and connect it by the distance realizing geodesic segment to each boundary component which meets orthogonally. Now, applying formula (vi) in Theorem 2.3.1~\cite{PB}, on the sharp corner $Q(\pi/\deg(v))$ as indicated in Figure~\ref{fig:1.1}, we have the length of the perpendicular geodesic segment $$x_v=\sinh^{-1}\left( \coth(1/4)\coth (\pi/\deg(v))\right).$$ 

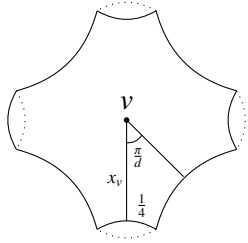
\begin{figure}[htbp]
\begin{center}
\begin{tikzpicture}
\draw (1.5*0.966, 1.5*0.259) to [bend left] (1.5*0.259, 1.5*0.966); \draw (-0.966*1.5, 0.259*1.5) to [bend right] (-0.259*1.5, 0.966*1.5); \draw (-0.966*1.5, -0.259*1.5) to [bend left] (-0.259*1.5, -0.966*1.5); \draw (0.966*1.5, -0.259*1.5) to [bend right] (0.259*1.5, -0.966*1.5); 

\draw (0.259*1.5, 0.966*1.5) to [bend left] (-0.259*1.5, 0.966*1.5);\draw [dotted] (0.259*1.5, 0.966*1.5) to [bend right] (-0.259*1.5, 0.966*1.5); 

\draw [dotted](0.259*1.5, -0.966*1.5) to [bend left] (-0.259*1.5, -0.966*1.5);\draw (0.259*1.5, -0.966*1.5) to [bend right] (-0.259*1.5, -0.966*1.5); 

\draw [dotted] (0.966*1.5, 0.259*1.5) to [bend left] (0.966*1.5,-0.259*1.5);\draw (0.966*1.5, 0.259*1.5) to [bend right] (0.966*1.5,-0.259*1.5);

\draw [dotted] (-0.966*1.5, 0.259*1.5) to [bend left] (-0.966*1.5,-0.259*1.5);\draw (-0.966*1.5, 0.259*1.5) to [bend right] (-0.966*1.5,-0.259*1.5);

\draw [fill] (0,0)node [above] {$v$} circle (0.03cm);

\draw (0,0) -- (0, -1.34); \draw (0, 0) -- (0.75, -0.75); 
\draw (0, -0.3) to [bend right] (0.2, -0.2); \draw (0.15, -0.5)node {\tiny $\frac{\pi}{d}$}; \draw (-0.15, -0.8)node {\tiny $x_v$}; \draw (0.2, -1.13)node {\tiny $\frac{1}{4}$};
\end{tikzpicture}
\end{center}
\caption{Hyperbolic $d$-holed sphere, $d=4.$}\label{fig:1.1}
\end{figure}

For an edge $e$ with the ends $u$ and $v$, we define $l(e)=x_u+x_v$. Now, we choose a positive real number $t$ such that $d_t(e)> l(e)+f_{\min}$ for all edges $e$ in $G$, where $f_{min}$ is given in Lemma~\ref{lem:1}. 

To each edge $e$, we assign a pair of pants $P_{x_e}$ (as in Section~\ref{pants}, Lemma~\ref{lem1}), where $x_e\in \mathbb{R}$ satisfying $f(x_e) = d_t(e)-l(e)$.

Now, glue the surfaces $S(v), v\in V$ and $P_{x_e}, e\in E$ along the boundaries of length $1$ according to the graph $G$ with twists so that all of the distance realizing orthogonal geodesic segments meet. Thus, we obtain a surface, denoted by $\Sigma_{\partial}(G,d)$, with boundary on which $(G, d_t)$ is isometrically embedded. We turn our surface into a closed surface $\Sigma(G,d)$ by attaching one-holed tori to the boundary components.     

Finally, we count the genus $g$ of $\Sigma(G,d)$ by counting the number of pairs of pants in a pants decomposition which gives $2g-2=\sum_{v\in V}(\deg(v)-2) +2|E|$. This equation and the relation $\sum\limits_{v\in V} \deg(v) = 2|E|$ conclude the proof. 
\end{proof}


\section{Fat graph structures and embeddings}\label{sec:3.5}
In this section, we consider a fat graph structure $\sigma_0$ on a given graph $(G, d)$ and construct a closed hyperbolic surface $S(G, d_t, \sigma_0)$ on which the metric graph $(G, d_t)$, for some $t> 0$, essentially and isometrically embedded. The genus of $S(G, d_t, \sigma_0)$ depends on $\sigma_0$. 

A graph $G$ on an oriented surface $S$ has a natural fat graph structure $\sigma_0$ determined by the orientation of $S$. Conversely, if $\sigma_0$ is any given fat graph structure on $G$, then there exists an essential and isometric embedding of $(G, d_t)$, for some $t>0$, on a closed hyperbolic surface $S$ of genus $g=|E|+\beta(G)$ (see Theorem~\ref{thm:1}), where the fat graph structure $\sigma_0$ is realized. Construction of such an embedding follows the similar procedure as in the proof of Theorem~\ref{thm:1}. Here, the only difference is that one needs to glue the building blocks hyperbolic $\deg(v)$-holed spheres $S(v)$, $v\in V$ and $P_{x_e}, e\in E$ according to the fat graph structure $\sigma_0$. 

\subsection{Embedding on a surface with totally geodesic boundary} 
Let $N_\epsilon(G, d_t, \sigma_0)$ be the regular (tubular) $\epsilon$ neighborhood of $G$ on $S$, where $\epsilon>0$ is sufficiently small. Let $\beta'$ be a boundary component of $N_\epsilon(G, d_t, \sigma_0)$. Then $\beta'$ is an essential simple closed curve on $S$ as the graph is essentially embedded (in particular, no coplementary region is a disc). Therefore, there is a unique geodesic representative $\beta$ (simple and closed) in its free homotopy class.

We obtain a hyperbolic surface $\Sigma_0(G, d_t , \sigma_0)$ with totally geodesic boundary by cutting the surface $S$ along the simple closed geodesics in the free homotopy classes of the boundary components of $N_\epsilon(G, d_t, \sigma_0)$.

\begin{lemma}
The metric graph $(G, d_t)$ is isometrically embedded on the hyperbolic surface $\Sigma_0(G, d_t , \sigma_0)$ with totally geodesic boundary. Furthermore, the number of boundary coponents of $\Sigma_0(G, d_t , \sigma_0)$ is the number of orbits of $\sigma_1*\sigma_0^{-1}$.
\end{lemma}

\subsection{Embedding on a closed hyperbolic surface} 
In this subsection, we cap the surface $\Sigma_0(G, d_t, \sigma_0)$ by hyperbolic surfaces with boundary to obtain a closed surface. Equivalently, we embed $\Sigma_0(G, d_t, \sigma_0)$ isometrically and essentially on a closed hyperbolic surface. We describe two gluing procedures below.

\subsubsection{Glue I}\label{G1} 
In this gluing procedure, we assume that $\Sigma_0(G, d_t, \sigma_0)$ has at least three boundary components and choose any three of them, say $\beta_1, \beta_2$ and $\beta_3$. We consider a pair of pants $Y$, with boundary geodesics $b_1, b_2$ and $b_3$ of lengths $l(\beta_1), l(\beta_2)$ and $l(\beta_3)$ respectively. We glue $\beta_i$ with $b_i$, $i=1,2,3,$ by hyperbolic isometries. In this gluing, the resulting surface has genus two more than the genus of $\Sigma_0(G, d_t, \sigma_0)$ and number of boundary components three less than that of $\Sigma_0(G, d_t, \sigma_0)$.

\subsubsection{Glue II}\label{G2} 
Let $\beta$ be a boundary geodesic of $\Sigma_0(G, d_t, \sigma_0)$. We glue a hyperbolic 1-holed torus with boundary length $l(\beta)$ to the surface $\Sigma_0(G, d_t, \sigma_0)$ along $\beta$ by an isometry. The resultant surface will have genus one more than that of $\Sigma_0(G, d_t, \sigma_0)$ and the number of boundary components one less than that of $\Sigma_0(G, d_t, \sigma_0)$.

Now, assume that $\Sigma_0(G, d_t, \sigma_0)$ has $b$ boundary components. Using the division algorithm there are unique integers $q$ and $r$ such that $b=3q+r$, where $0\leq r \leq 2$. Then following the gluing procedure Glue I (see Subsection~\ref{G1}) for $q$ times and Glue II (see Subsection~\ref{G2}) for $r$ times, we obtain the desired closed hyperbolic surface denoted by $S(G, d_t, \sigma_0)$. 

\begin{rmk}
The genus of $S(G, d_t, \sigma_0)$ depends upon the fat graph structure $\sigma_0$. 
\end{rmk}


\section{Minimum Genus problem}\label{sec:4}
In this section, our goal is to prove Theorem\ref{thm:2}.

Let $(G, d)$ be a metric graph with degree of each vertex at least three and $\chi(G)$ denote the Euler characteristic of $G.$ We consider a fat graph structure $\sigma_0=\{\sigma_v|\; v\in V\}$ on $G$ and $\Sigma_0(G, d_t, \sigma_0)$, the hyperbolic surface with geodesic boundary obtained in Section~\ref{sec:3.5}. As $G$ is a spine of $\Sigma_0(G, d_t, \sigma_0)$, we have 
\begin{equation}\label{eq:7}
\chi(\Sigma_0(G, d_t, \sigma_0))=\chi(G),
\end{equation}
where $\chi(\Sigma_0(G, d_t, \sigma_0))$ denotes the Euler characteristic of $\Sigma_0(G, d_t, \sigma_0)$. The assumption $\deg(v)\geq 3$  and the relation $ 2|E| = \sum_{v\in V} \deg(v) \geq 3|V|$ implies that $\chi(G)< 0$.

\begin{lemma}\label{lem:5.1}
Let $\sigma_0$ and $\sigma_0'$ be two fat graph structures on $(G, d)$. Then the difference between the number of boundary components of $\Sigma_0(G, d_t, \sigma_0)$ and $\Sigma_0(G, d_t, \sigma_0')$ is an even integer, i.e., $\#\partial\Sigma_0(G, d_t, \sigma_0 ) - \# \partial  \Sigma_0(G, d_t, \sigma_0')$ is divisible by 2. 
\end{lemma}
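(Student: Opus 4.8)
The key observation is that the number of boundary components of $\Sigma_0(G, d_t, \sigma_0)$ is the number of disjoint cycles (orbits) of the permutation $\sigma_1 * \sigma_0^{-1}$ acting on the set $E_1$ of directed edges, as recorded in the lemma preceding Glue~I and in Section~\ref{Fat graph}. Thus the statement is purely combinatorial: it asserts that as we vary the fat graph structure $\sigma_0$ while keeping $(E_1, \sim, \sigma_1)$ fixed, the parity of the number of cycles of $\sigma_1 * \sigma_0^{-1}$ is invariant. The plan is to reduce to a parity statement about permutations and their cycle counts, then exploit the fact that any two fat graph structures differ by a product of transpositions localized at individual vertices.

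First I would set $\phi = \sigma_1 * \sigma_0^{-1}$ and $\phi' = \sigma_1 * \sigma_0'^{-1}$, both permutations of the finite set $E_1$. The number of boundary components is the number of cycles $c(\phi)$, respectively $c(\phi')$. The standard fact I will use is that for any permutation $\pi$ of an $n$-element set, the sign satisfies $\mathrm{sgn}(\pi) = (-1)^{n - c(\pi)}$; equivalently, the parity of $c(\pi)$ is determined by $n$ together with $\mathrm{sgn}(\pi)$. Since $E_1$ is fixed (so $n = |E_1|$ is the same for both structures), we get
\begin{equation*}
c(\phi) - c(\phi') \equiv \bigl(n - c(\phi')\bigr) - \bigl(n - c(\phi)\bigr) \pmod 2,
\end{equation*}
and hence $c(\phi) \equiv c(\phi') \pmod 2$ if and only if $\mathrm{sgn}(\phi) = \mathrm{sgn}(\phi')$. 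So it suffices to show that $\sigma_1 * \sigma_0^{-1}$ and $\sigma_1 * \sigma_0'^{-1}$ have the same sign, i.e. that $\mathrm{sgn}(\sigma_0) = \mathrm{sgn}(\sigma_0')$.

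Next I would verify that all fat graph structures on $G$ have the same sign as permutations of $E_1$. A fat graph structure $\sigma_0 = \prod_{v \in V} \sigma_v$ is a product of disjoint cycles, one cycle $\sigma_v$ of length $\deg(v)$ on the directed edges emanating from each vertex $v$; its sign is therefore $\prod_{v \in V} (-1)^{\deg(v) - 1}$, which depends only on the degree sequence of $G$ and not on the chosen cyclic orders. Since $\sigma_0$ and $\sigma_0'$ permute the same orbit of edges at each vertex (they differ only in the cyclic ordering within each vertex), they share this degree-determined sign. Consequently $\mathrm{sgn}(\sigma_0^{-1}) = \mathrm{sgn}(\sigma_0'^{-1})$, and multiplying by the common factor $\mathrm{sgn}(\sigma_1)$ gives $\mathrm{sgn}(\phi) = \mathrm{sgn}(\phi')$. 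Combined with the sign–cycle-count relation above, this forces $c(\phi) \equiv c(\phi') \pmod 2$, which is exactly the claim.

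I do not anticipate a serious obstacle, since the argument is elementary once phrased in terms of signs of permutations. The one point requiring care is the bookkeeping at low-degree vertices: a cycle of length $\deg(v)$ contributes sign $(-1)^{\deg(v)-1}$, so only the parities of the degrees matter, and I should note that the standing assumption $\deg(v) \geq 3$ guarantees each $\sigma_v$ is a genuine nontrivial cycle on the emanating directed edges. The cleanest route is simply to invoke $\mathrm{sgn}(\pi) = (-1)^{|E_1| - c(\pi)}$ as the single bridge between cycle counts and signs, reducing the entire lemma to the invariance of $\mathrm{sgn}(\sigma_0)$ across fat graph structures.
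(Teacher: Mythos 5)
Your proof is correct. Note that the paper itself leaves Lemma~\ref{lem:5.1} unproved (``The proof of Lemma~\ref{lem:5.1} is left to the reader''), so there is no written argument to compare against; the intended proof, however, is almost certainly the one suggested by equation~\eqref{eq:7} immediately preceding the statement: since $G$ is a spine of $\Sigma_0(G,d_t,\sigma_0)$, the Euler characteristic $\chi(\Sigma_0(G,d_t,\sigma_0))=\chi(G)$ is independent of $\sigma_0$, and writing $\chi(G)=2-2g-b$ for a genus-$g$ surface with $b$ boundary components forces $b\equiv\chi(G)\pmod 2$ for every fat graph structure, which gives the lemma at once. Your route is genuinely different and purely combinatorial: you identify $\#\partial\Sigma_0(G,d_t,\sigma_0)$ with the number of cycles of $\sigma_1*\sigma_0^{-1}$ on $E_1$ (as the paper records), invoke $\mathrm{sgn}(\pi)=(-1)^{n-c(\pi)}$, and observe that every fat graph structure has the same cycle type---one cycle of length $\deg(v)$ at each vertex---hence the same sign $\prod_{v\in V}(-1)^{\deg(v)-1}$, so the cycle counts of $\sigma_1*\sigma_0^{-1}$ and $\sigma_1*\sigma_0'^{-1}$ agree modulo $2$. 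All steps check out: the vertex cycles $\sigma_v$ are disjoint, $\mathrm{sgn}(\pi^{-1})=\mathrm{sgn}(\pi)$, and, as you note, the hypothesis $\deg(v)\geq 3$ plays no real role. As for what each approach buys: the Euler-characteristic argument is shorter but leans on the topological fact that the thickened surface deformation retracts onto $G$; yours needs no surface at all and proves the sharper statement that the parity of the boundary count of the abstract fat graph $(G,\sigma_0)$ is an invariant of the underlying graph---indeed your sign computation gives $c(\sigma_1*\sigma_0^{-1})\equiv|V|+|E|\equiv\chi(G)\pmod 2$, recovering exactly the congruence that the Euler-characteristic proof produces.
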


The proof of Lemma~\ref{lem:5.1} is left to the reader.

The number of boundary components of a surface $F$ is denoted by $\#\partial F$. The genus of a fat graph $(G, \sigma_0)$ is the genus of the associated surface and denoted by $g(G, \sigma_0)$. Similarly, we define the number of boundary components of a fat graph and is denoted by $\#\partial (G, \sigma_0)$.

\begin{lemma}\label{prop:1}
Let $\sigma_0$ and $\sigma_0'$ be two fat graph structures on a metric graph $(G, d)$ such that $\#\partial(\Sigma_0(G, d_t, \sigma_0) )-\#\partial(\Sigma_0(G, d_t, \sigma_0'))=2$. Then we have $$g(S(G, d_t, \sigma_0'))\leq g(S(G, d_t, \sigma_0)).$$
\end{lemma}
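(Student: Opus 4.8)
The plan is to express the genus of each closed surface $S(G,d_t,\sigma_0)$ in terms of the two invariants that are actually under our control: the fixed Euler characteristic $\chi(G)$ (which by~\eqref{eq:7} equals $\chi(\Sigma_0(G,d_t,\sigma_0))$ and does \emph{not} depend on $\sigma_0$), and the number of boundary components $b=\#\partial\Sigma_0(G,d_t,\sigma_0)$. First I would record the genus of the bordered surface $\Sigma_0$: from $\chi(\Sigma_0)=2-2h-b$, where $h$ denotes its genus, we get $h=\tfrac12(2-\chi(G)-b)=\tfrac12(2-\chi(G))-\tfrac b2$. Then I would track how the capping procedure of Section~\ref{sec:3.5} changes the genus. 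Writing $b=3q+r$ with $0\le r\le 2$, the surface $S(G,d_t,\sigma_0)$ is obtained by $q$ applications of Glue~I (each adds $2$ to the genus and removes $3$ boundary components) and $r$ applications of Glue~II (each adds $1$ to the genus and removes $1$ boundary component). Hence
\begin{equation}\label{eq:genusformula}
g(S(G,d_t,\sigma_0)) = h + 2q + r = \tfrac12\bigl(2-\chi(G)\bigr) - \tfrac{b}{2} + 2q + r.
\end{equation}

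Next I would treat $\sigma_0$ and $\sigma_0'$ together. By hypothesis $\#\partial\Sigma_0(G,d_t,\sigma_0)=b$ and $\#\partial\Sigma_0(G,d_t,\sigma_0')=b-2$, so both surfaces have the same $\chi(G)$ and the same genus-contribution $\tfrac12(2-\chi(G))$ from the bordered piece, and the comparison reduces to comparing the function
$$
\phi(b) = -\tfrac{b}{2} + 2q + r, \qquad b=3q+r,\ 0\le r\le 2,
$$
at the two consecutive even(-difference) values $b$ and $b-2$. The claim $g(S(G,d_t,\sigma_0'))\le g(S(G,d_t,\sigma_0))$ is then exactly the monotonicity statement $\phi(b-2)\le\phi(b)$. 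I would verify this by a short case analysis on the residue $r$ of $b$ modulo $3$: decreasing $b$ by $2$ changes $(q,r)$ in one of three predictable ways, and in each case one checks directly that the increase $-\tfrac12(b-2)+\tfrac b2 = 1$ coming from the $-b/2$ term dominates (or matches) the decrease in $2q+r$. Concretely, reducing $b$ by $2$ lowers $2q+r$ by either $1$ or $2$ depending on the residues, and since the $-b/2$ term rises by exactly $1$, the net change $\phi(b-2)-\phi(b)$ is either $0$ or $-1$, never positive.

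I expect the main obstacle — really the only subtle point — to be the residue bookkeeping in passing from $b$ to $b-2$: because $2$ and $3$ are coprime, subtracting $2$ cycles the residue $r$ through all three classes, so the quotient $q$ does not simply drop by a fixed amount, and one must check each of the three residue cases ($r=0,1,2$, equivalently $b\equiv 0,1,2\pmod 3$) separately rather than arguing by a single uniform inequality. A clean way to package this is to observe that $2q+r$ is, up to the additive constant, a monotone nondecreasing step function of $b$ that increases in total by exactly the number of Glue~I/Glue~II operations, and to compare it against the linear term $b/2$; since each unit decrease in $b$ decreases $b/2$ by $\tfrac12$ while decreasing $2q+r$ by at most $\tfrac12$ on average, the difference $\phi$ is nondecreasing in $b$, which gives $\phi(b-2)\le\phi(b)$ and hence the desired inequality. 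The remaining steps are routine: substitute back into~\eqref{eq:genusformula}, cancel the common $\tfrac12(2-\chi(G))$ term, and conclude.
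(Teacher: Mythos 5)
Your proposal is correct and takes essentially the same route as the paper: both arguments use $\chi(\Sigma_0(G,d_t,\sigma_0))=\chi(G)$ (independent of the fat graph structure) to trade the boundary-count hypothesis for a genus relation, and then run the identical three-case analysis on $b \bmod 3$, finding that the genus of the capped surface changes by $0$, $0$, or $-1$ when $b$ drops to $b-2$ (for $r=0,1,2$ respectively). One caveat on your final ``clean packaging'' aside, which is inaccurate as stated: $2q+r$ decreases by $2/3$ per unit of $b$ on average (not at most $\tfrac12$), and $\phi$ is \emph{not} nondecreasing under unit steps (e.g.\ $\phi(3)=\tfrac12<1=\phi(2)$); only the step-of-two monotonicity holds, and that is exactly what your three-case check --- and the paper's proof --- establishes, so the case analysis cannot be replaced by the averaging argument.
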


\begin{proof}
Suppose that the genus and the number of boundary components of $\Sigma_0(G, d_t, \sigma_0)$ are $g$ and $b$ respectively. Then by Euler's formula and equation~\eqref{eq:7}, we have $2-2g-b=\chi(G)$ which implies that $b=2-2g-\chi(G)$. For the integer $b$, by the division algorithm, there exist unique integers $q$ and $r$ such that $b=3q+r,\;\;\textit{where}\;\; 0\leq r< 3.$ Therefore, by construction (see Section~\ref{sec:3.5}), the genus of $S(G, d_t, \sigma_0)$ is as following  $$g(S(G, d_t, \sigma_0) ) = g+2q+r.$$ Let us assume that the genus and number of boundary components of $\Sigma_0(G, d_t, \sigma_0')$ are $g'$ and $b'$ respectively. Then by Euler's formula and equation~\eqref{eq:7}, we have $b'=2-2g'-\chi(G)$. The hypothesis $b'=b-2$ of the lemma implies $g'=g+1.$ 

Now, we compute the genus of the closed surface $S(G, d_t, \sigma_0')$. There are three cases to consider as $b'=3q+r-2$ with $r\in \{0,1,2\}$. 

\subsection*{Case 1} $r=0$. 
In this case $b'=3(q-1)+1$. Thus the genus of $S(G, d_t, \sigma_0')$ is $g'+2(q-1)+1=g+2q$ which is equal to the genus of $S(\Sigma, d_t, \sigma_0)$. Therefore, the lemma holds with equality.

\subsection*{Case 2} $r=1$. 
In this case $b'=3(q-1)+2$. Therefore, genus of $S(G, d, \sigma_0')$ is $g+1+2(q-1)+2=g+2q+1$ which is  equal to the genus of $S(\Sigma, d_t, \sigma_0)$. Therefore, the lemma holds with equality. 

\subsection*{Case 3} 
The remaining possibility is $r=2$. In this case the genus of $S(G, d_t, \sigma_0)$ is $g+2q+2$. Now, $b'=b-2=3q$ implies that the genus of $S(G, d_t, \sigma_0')$ is $g'+2q=g+1+2q$. Therefore, we have $$g(S(G, d_t, \sigma_0'))=g(S(G, d_t, \sigma_0))-1<g(S(G, d_t, \sigma_0)).$$ 
\end{proof}

\begin{proof}[Proof of Theorem~\ref{thm:2}]
To find the essential genus of $(G, d)$, we consider a fat graph structure $\sigma_0$ on $G$ which gives maximum genus of $\Sigma_0(G, d_t, \sigma_0)$, equivalently minimum number of boundary components (see Lemma~\ref{prop:1}). For such a fat graph structure $\sigma_0$, the genus of $\Sigma_0(G, d_t, \sigma_0)$ is $\frac{1}{2}(\beta(G)-\zeta(G))$ which follows from Theorem 3 in~\cite{NHX}. Moreover, the number of boundary components of the fat graph $(G,\sigma_0)$ is $1+\zeta(G)$, which is equal to the number of boundary components of $\Sigma_0(G, d_t, \sigma_0)$. By the division algorithm, for the integer $1+\zeta(G)$, there are unique integers $q$ and $r$ such that $$1+\zeta(G)=3q+r,\;\;\textit{where}\;\; 0\leq r< 3.$$ Therefore, the genus of $S(G, d_t, \sigma_0)$ is $g_e(G) = \frac{1}{2}(\beta(G)-\zeta(G))+2q+r$, follows from the construction in Section~\ref{sec:3.5}. This proves the first part of the theorem.

Now, we focus on the proof of the remaining part of the theorem, i.e., we show that for any $g\geq g_{e}(G)$ the graph $(G, d_t)$ can be embedded on a closed hyperbolic surface of genus $g$. We define $g'=g-g_e(G).$ Let us consider the surface $S(G, d_t, \sigma_0)$ of genus $g_e(G)$ constructed above.  Now, there are two possibilities. 

\subsection*{Case 1} 
If the number of boundary components of $\Sigma_0(G, d_t, \sigma_0)$ is divisible by $3$, then we have a $Y$-piece, denoted by $Y(\beta_1', \beta_2', \beta_3')$, attached to $\Sigma_0(G, d_t, \sigma_0)$ along the boundary components $\beta_1, \beta_2, \beta_3$ by hyperbolic isometries in the construction of $S(G, d_t, \sigma_0)$ (see the construction in Section~\ref{sec:3.5}). We replace this $Y$-piece from $S(G, d_t, \sigma_0)$ by a hyperbolic surface $F_{g', 3}$ of genus $g'$ and three boundary components, again denoted by $\beta_1', \beta_2'$ and $\beta_3'$, of lengths $l(\beta_1), l(\beta_2)$ and $l(\beta_3)$ respectively. We denote the new surface by $S_{g'}(G, d_t, \sigma_0)$. 

\subsection*{Case 2} 
In this case, we consider the number of boundary components of $\Sigma_0(G, d_t, \sigma_0)$ is not divisible by $3$. Then there is a subsurface $F_{1, 1}$ of genus $1$ and a single boundary component $\beta'$, which we have attached to $\Sigma_0(G, d_t, \sigma_0)$ along the boundary component $\beta$ to obtain $S(G, d_t, \sigma_0)$. Now, we replace $F_{1, 1}$ by $F_{g'+1, 1}$, a hyperbolic surface of genus $g'+1$ and a single boundary component $\beta'$ of length $l(\beta)$, in $S(G, d_t, \sigma_0)$ and the obtained new surface is denoted by $S_{g'}(G, d_t, \sigma_0)$. 

The surface $S_{g'}(G, d_t, \sigma_0)$ has genus $g$ on which $(G, d_t)$ is isometrically embedded. 
\end{proof}

\section{Algorithm: Minimal embedding with minimum/maximum genus}\label{sec:6}
In this section, we study minimal essential embeddings and prove Proposition~\ref{thm:4}. We conclude this
section with Remarks~\ref{rmks:end} which provides an algorithm for minimal embedding with minimum
and maximum genus.

Let us consider a trivalent fat graph $(\Gamma,\sigma_0)$ with a vertex $v$ which is shared by three distinct boundary components. We construct a new fat graph structure to reduce the number of boundary components. 

\begin{lemma}
Let $(\Gamma,\sigma_0)$ be a 3-regular fat graph. If $\Gamma$ has a vertex which is common in three boundary components, then there is a fat graph structure $\sigma'_0$ such that $$\#\partial(\Gamma, \sigma'_0)= \#\partial(\Gamma, \sigma_0)-2.$$
\end{lemma}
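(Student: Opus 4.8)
The plan is to make the boundary-reduction combinatorial and local. Recall that the boundary components of a fat graph $(\Gamma,\sigma_0)$ are the cycles of the permutation $\sigma_1*\sigma_0^{-1}$ on the set $E_1$ of directed edges, so reducing $\#\partial$ by exactly $2$ means producing a new vertex permutation $\sigma_0'$ for which $\sigma_1*(\sigma_0')^{-1}$ has two fewer cycles than $\sigma_1*\sigma_0^{-1}$. Since only the cyclic order at the single vertex $v$ will change, $\sigma_0'$ will differ from $\sigma_0$ only in the factor $\sigma_v$; every other $\sigma_w$ stays fixed. Thus the whole problem localizes to understanding how transposing/re-cycling the three directed edges emanating from the trivalent vertex $v$ affects the cycle structure of $\sigma_1*\sigma_0^{-1}$.

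First I would set up notation for the star of $v$. Because $\Gamma$ is $3$-regular, let the three directed edges emanating from $v$ be $a_1,a_2,a_3$, so that $\sigma_v=(a_1\,a_2\,a_3)$ is the only nontrivial cycle of $\sigma_0$ touching $v$. The three boundary components meeting at $v$ correspond to the three ``corners'' of $v$, i.e.\ to the three places where $\sigma_1*\sigma_0^{-1}$ passes through the letters $a_1,a_2,a_3$; the hypothesis that $v$ is common to three \emph{distinct} boundary components says precisely that $a_1,a_2,a_3$ lie in three different cycles of $\sigma_1*\sigma_0^{-1}$. I would then define $\sigma_0'$ by replacing the $3$-cycle $(a_1\,a_2\,a_3)$ with its inverse $(a_1\,a_3\,a_2)$ (equivalently, swapping the cyclic order at $v$), keeping all other vertex cycles unchanged.

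The key computation is to track what happens to the three cycles of $\sigma_1*\sigma_0^{-1}$ through $a_1,a_2,a_3$ under this local change. Writing the three distinct boundary cycles as $\gamma_1,\gamma_2,\gamma_3$, each entering $v$ along one of the $a_i$, the effect of replacing $\sigma_v$ by $\sigma_v^{-1}$ is to reroute the boundary-following rule at $v$ so that these three previously separate cycles get spliced. The standard fat-graph fact here is that conjugating/reversing a single $3$-cycle in $\sigma_0$ merges the three distinct boundary cycles passing through that vertex into a single boundary cycle, lowering the cycle count by exactly $2$ (three cycles become one). I would verify this by a direct symbol-chase on how $\sigma_1*(\sigma_0')^{-1}$ acts on the six symbols $a_i,\sigma_1(a_i)$, confirming that the three orbits coalesce and that no other orbit is affected (since $\sigma_0'$ agrees with $\sigma_0$ away from $v$). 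This yields $\#\partial(\Gamma,\sigma_0')=\#\partial(\Gamma,\sigma_0)-2$.

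The main obstacle I anticipate is bookkeeping in the symbol-chase: one must be careful that the three edges $a_1,a_2,a_3$ really lie in three \emph{distinct} cycles (this is exactly where the hypothesis is used — if two coincided the count would change by a different amount), and that reversing the local cyclic order splices rather than further splits them. The cleanest way to avoid an error-prone case analysis is to argue abstractly: the parity result already recorded in Lemma~\ref{lem:5.1} forces any change in $\#\partial$ to be even, so it suffices to show the three orbits are no longer all distinct and that at most one merge can occur at a single trivalent vertex, pinning the change to exactly $-2$. I would present the explicit permutation identity as the rigorous core and use the distinctness hypothesis to rule out the alternative merges, thereby obtaining the stated reduction by two.
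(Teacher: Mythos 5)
Your proposal is correct and follows essentially the same route as the paper: the paper also changes only the cyclic order at the trivalent vertex, replacing $\sigma_v=(\vec{e}_1,\vec{e}_2,\vec{e}_3)$ by $(\vec{e}_2,\vec{e}_1,\vec{e}_3)$, which is exactly your inverse cycle $(a_1\,a_3\,a_2)$, and then verifies by the same symbol-chase that the three distinct boundary cycles $\partial_1,\partial_2,\partial_3$ splice into the single cycle $\partial_2*\partial_3*\partial_1$ while all other cycles are untouched. The only difference is presentational: the paper carries out the concatenation explicitly, whereas you defer it (and add a parity shortcut via Lemma~\ref{lem:5.1}), but the argument you describe is the paper's argument.
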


\begin{proof}
Let $v$, a vertex, be in three distinct boundary components  of $\Gamma$. Assume that $v=\{ \vec{e}_i,\ i=1,2,3\}$ with $\sigma_v=(\vec{e}_1, \vec{e}_2, \vec{e}_3)$ (see Figure~\ref{fig:5}, left). Suppose $\partial_i, i=1,2,3$ are the boundary components given by (see Figure~\ref{fig:5}, right)
\begin{eqnarray*}
\partial_1 = \vec{e}_1P_1\cev{e}_3, \partial_2 = \vec{e}_2P_2\cev{e}_1 \text{ and } \partial_3 = \vec{e}_3P_3 \cev{e}_2,  
\end{eqnarray*}  
where $P_i$'s are finite (possibly empty) paths in the graph and $\cev{e}_i=\sigma_1(\vec{e}_i)$. We replace the order $\sigma_v=(\vec{e}_1, \vec{e}_2, \vec{e}_3)$ by $\sigma_v'=(\vec{e}_2, \vec{e}_1, \vec{e}_3)$ to obtain a new fat graph structure $\sigma_0'$. Then the boundary components of $(\Gamma, \sigma_0')$ given by $$\partial(\Gamma, \sigma_0')=(\partial(\Gamma,\sigma_0) \setminus\{\partial_i|i=1,2,3\})\cup \{\partial\},$$ where $\partial = \vec{e}_2P_2\cev{e}_1\vec{e}_3P_3\cev{e}_2\vec{e}_1 P_1\cev{e}_3=\partial_2*\partial_3*\partial_1.$ Here, $*$ is the usual concatenation operation. Therefore, the number of boundary components in $(\Gamma, \sigma_0')$ is the same as the number of boundary components in $(\Gamma, \sigma_0)$ minus two.

\tikzset{->-/.style={decoration={
  markings,
  mark=at position .5 with {\arrow{>}}},postaction={decorate}}} 
  
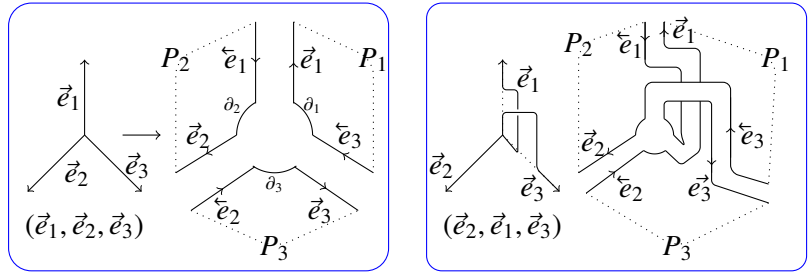
\begin{figure}[htbp]
\begin{center}
\begin{tikzpicture}
\draw [rounded corners=3mm,blue] (1.5, 1.75)--(-3.5, 1.75)--(-3.5, -1.8)--(1.5, -1.8)--cycle;
\draw [<->] (-2.5, 1) -- (-2.5, 0) -- (-3.25, -0.75); \draw [->] (-2.5, 0) -- (-1.75, -0.75); \draw [->] (-2, 0) -- (-1.5, 0); \draw (-2.5, -1.2) node {$(\vec{e}_1, \vec{e}_2, \vec{e}_3)$};
\draw (-2.7, 0.5) node {$\vec{e}_1$}; \draw (-2.6, -0.5) node {$\vec{e}_2$}; \draw (-1.8, -0.4) node {$\vec{e}_3$};

\draw [->-] (0.25, 0.42) -- (0.25, 1.5); \draw [->-] (-0.25, 1.5) -- (-0.25, 0.42); \draw [->-] (1.3, -0.5) -- (0.5, 0); \draw [->-](-1.1, -1) -- (-0.26, -0.42); \draw [->-](-0.5, 0) -- (-1.3, -0.5);  \draw [->-] (0.26, -0.42) -- (1.1, -1); \draw (0.5,0.4) node {\tiny{$\partial_1$}}; \draw (-0.55,0.4) node {\tiny{$\partial_2$}}; \draw (0,-0.65) node {\tiny{$\partial_3$}};

\draw [dotted, rounded corners=2mm] (0.25, 1.5) --(1.3,1) -- (1.3, -0.5); \draw (1.3, 1) node {$P_1$};

\draw [dotted, rounded corners=2mm]  (-1.3, -0.5)--(-1.3,1)-- (-0.25, 1.5); \draw (-1.3, 1) node {$P_2$};
\draw [dotted, rounded corners=2mm] (-1.1, -1) --(0, -1.5)node {$P_3$}-- (1.1, -1);

\draw [domain=0:60] plot ({0.5*cos(\x)}, {0.5*sin(\x)}); \draw [domain=120:180] plot ({0.5*cos(\x)}, {0.5*sin(\x)}); \draw [domain=235:305] plot ({0.5*cos(\x)}, {0.5*sin(\x)}); \draw (0.5, 1) node {$\vec{e}_1$}; \draw (-0.5, 1) node {$\cev{e}_1$}; \draw (-1,0) node {$\vec{e}_2$}; \draw (1,0) node {$\cev{e}_3$}; \draw (-0.6,-1) node {$\cev{e}_2$}; \draw (0.6,-1) node {$\vec{e}_3$}; 
\draw [rounded corners=2mm,blue] (2, 1.75)--(2, -1.8)--(7, -1.8)--(7,1.75)--cycle;

\draw [domain=0:60] plot ({5+0.25*cos(\x)}, {0.25*sin(\x)}); \draw [domain=120:180] plot ({5+0.25*cos(\x)}, {0.25*sin(\x)}); \draw [domain=235:305] plot ({5+0.25*cos(\x)}, {0.25*sin(\x)});

\draw [->-] (4.75, 0) -- (4, -0.5); \draw [->-] (4.1, -0.77) -- (4.856, -0.205); \draw [rounded corners = 1mm] (4.875, 0.216) -- (4.875, 0.7)-- (6, 0.7) -- (6, -0.5) -- (6.5, -0.65); \draw [rounded corners=0.5mm] (5.125,0.216) -- (5.125, 0.45) -- (5.75,0.45) -- (5.75, -0.65) -- (6.5, -0.9); \draw [rounded corners=0.75mm] (4.875,1.5) -- (4.875, 0.9) -- (5.35, 0.9) -- (5.35, 0.7) ; \draw [rounded corners=0.75mm](5.125, 1.5) -- (5.125,1.15) -- (5.6, 1.15) -- (5.6, 0.7); \draw [rounded corners=0.5mm] (5.143, -0.205) -- (5.35, -0.4) -- (5.6, -0.21) -- (5.6, 0.45); \draw [rounded corners=0.5mm] (5.25, 0) -- (5.4, -0.15) -- (5.35, 0) -- (5.35, 0.45);
\draw [->-] (6, -0.3) -- (6, 0.5);\draw (6, 0) node [right] {$\cev{e}_3$}; \draw (4.2, -0.1) node {$\vec{e}_2$}; \draw (4.7, -0.7) node {$\cev{e}_2$}; \draw [->-] (4.875, 1.5) -- (4.875, 1); \draw (4.7, 1.2)node {$\cev{e}_1$}; \draw [->-] (5.125, 1.3) -- (5.125, 1.5); \draw (5.4, 1.4)node {$\vec{e}_1$}; \draw [->-] (5.75, -0.3) -- (5.75, -0.5); \draw (5.6, -0.7) node {$\vec{e_3}$};

\draw [dotted, rounded corners=2mm]  (5.125, 1.5)--(6.6,1)node {$P_1$}-- (6.5, -0.65);
\draw [dotted,rounded corners=2mm] (6.5, -0.9) --(5.3, -1.5)node {$P_3$}-- (4.1, -0.77);

\draw [dotted, rounded corners=2mm] (4, -0.5) -- (4, 1.2)node {$P_2$} -- (4.875, 1.5);

\draw [->](3, 0) -- (2.25,-0.75); \draw [dotted] (3, 0) -- (3.75, -0.75); \draw [dotted](3, 1) -- (3, 0); \draw [rounded corners=0.5mm, <-] (3, 1) -- (3, 0.6) -- (3.2, 0.6) -- (3.2, 0.34); \draw [rounded corners=0.3mm] (3.2, 0.26) -- (3.2, -0.24) -- (3, 0); \draw [rounded corners=0.5mm, <-] (3.75, -0.75) -- (3.45, -0.45) -- (3.45, 0.3) -- (3, 0.3) -- (3, 0);

\draw (3.35, 0.75) node  {$\vec{e}_1$}; \draw (2.2, -0.4) node {$\vec{e}_2$}; \draw (3.4, -0.7) node {$\vec{e}_3$}; \draw (3, -1.2) node {$(\vec{e}_2, \vec{e}_1, \vec{e}_3)$};

\end{tikzpicture}
\caption{Change of cyclic order at a vertex}\label{fig:5}
\end{center}
\end{figure}
\end{proof}

\begin{proof}[Proof of Proposition~\ref{thm:4}]
Let us consider $v_0=\{\vec{e}_1, \vec{e}_2, \dots, \vec{e}_k\}$ be a vertex which is shared by at least three boundary components. We assume that the cyclic order at $v_0$ is given by $$\sigma_{v_0}=\left( \vec{e}_1, \vec{e}_2, \vec{e}_3, \dots, \vec{e}_i, \vec{e}_{i+1}, \dots, \vec{e}_k\right),$$ where $k\geq 3$, $3 \leq i\leq k$ and $\vec{e}_{k+1}=\vec{e}_1$. We can choose three boundary components $b_i, i=1, 2, 3,$ such that there is an edge $e_2=\{\vec{e}_2, \cev{e}_2\}$ with $\vec{e}_2$ in $b_1$ and $\cev{e}_2$ in $b_2$. We can write $b_1=\vec{e}_2 P_1 \cev{e}_1, b_2 = \vec{e}_3 P_2 \cev{e}_2$ and $b_3 = \vec{e}_{i+1} P_3 \cev{e}_i$, where $P_j$'s are some paths (possibly empty) in the fat graph. Now, we consider a new cyclic order at $v$, given by $\sigma'_v=(\vec{e}_1, \vec{e}_3, \dots, \vec{e}_i, \vec{e}_2, \vec{e}_{i+1},\dots, \vec{e}_k)$. Then, in the new fat graph structure $\sigma_0'$, the boundary components of $(G, \sigma_0')$ are $\left(\partial(G, \sigma_0)\setminus \{b_1, b_2, b_3\}\right) \cup\{b\}$, where $b=b_1*b_2*b_3$. Therefore, we have $$\#\partial(G, \sigma_0')=\#\partial(G, \sigma_0)-2.$$
\end{proof}

\begin{rmks}\label{rmks:end}
\begin{enumerate}
\item One can obtain minimal embedding by applying Proposition~\ref{thm:4}. let us consider a fat graph $(G, \sigma_0)$. If $v$ is a vertex shared by $k\;\leq 2$ boundary components, then it follows from Lemma~\ref{lem:5.1} and Lemma~\ref{prop:1} that there is no replacement of the cyclic order $\sigma_v$ (keeping the cyclic order on other vertices unchanged) to reduce the number of boundary components. If there is a vertex $v$ which is shared by at least three boundary components, then one can replace the fat graph structure by applying Proposition~\ref{thm:4} to reduce the number of boundary components by two. Therefore, by repeated application of Proposition~\ref{thm:4}, we can obtain a fat graph structure on the graph $G$ that provides the essential genus $g_e(G)$.

\item Using Proposition~\ref{thm:4}, in the reverse way, we can obtain a fat graph structure which provides the maximal genus $g_e^{\max}(G)$ of a minimal embedding. Namely, if there is a vertex $v$ with cyclic order $\sigma_v=(\vec{e}_2, \vec{e}_3\dots,\vec{e}_i, \vec{e}_1, \vec{e}_{i+1}, \dots \vec{e}_k)$ and a boundary component $\partial$ of the form $\partial = \vec{e}_2 P_1 \cev{e_1} \vec{e_3} P_2  \cev{e}_2 e_{i+1} P_3 \cev{e}_i$, where $P_j$'s are some paths in $G$, only then, we can replace $\sigma_0$ by $\sigma_v' = (\vec{e}_1, \vec{e}_2, \vec{e}_3, \dots, \vec{e}_i, \vec{e}_{i+1}, \dots, \vec{e}_k)$ to obtain a new fat graph structure $\sigma_0'$, such that $$\#\partial(G, \sigma_0')=\#\partial(G, \sigma_0)+2.$$   By repeated use of Proposition~\ref{thm:4}, we can obtain a fat graph structure on $G$ which provides $g_e^{\max}(G)$. 
\end{enumerate}

\end{rmks}

\end{document}